\newtheorem{theorem}{Theorem}
\newtheorem{lemma}{Lemma}
\newtheorem{corol}{Corollary}
\theoremstyle{definition}
\newtheorem{exam}{Example}
\def\bwr{\mathop{\text{\large$\wr$}}}
\def\lst#1#2{ #1_1 , #1_2 , \dots , #1_{#2} }
\def\setsuch#1#2{\left\{\,#1\mid #2\,\right\}}
\def\gnrsuch#1#2{\langle\,#1\mid #2\,\rangle}
\def\={\setminus}	\def\0{\emptyset}
\def\fX{\mathbf x}	\def\fY{\mathbf y}
\def\bR{\mathbf R}	\def\bS{\mathbf S}
\def\8{\infty}
\begin{document}
 
 \title{Generators and relations for wreath products}
 \author{Yu.~A.~Drozd}
 \address{Institute of Mathematics, National Academy of Sciences of Ukraine,
 Tereschenkivska str. 3, 01601 Kiev, Ukraine}
 \email{drozd@imath.kiev.ua}
 \urladdr{www.imath.kiev.ua/$~$drozd}
 \author{R.~V.~Skuratovski}
 \address{Kiev National Taras Shevchenko University,
 Department of Mechanics and Mathematics, Volodymyrska str. 64,
 01033 Kiev, Ukraine}
 \email{ruslcomp@mail.ru}
  \begin{abstract}
  Generators and defining relations for wreath products of groups are given. Under some condition
 (conormality of the generators) they are minimal. In particular, it is just the case for the Sylow
 subgroups of the symmetric groups.
 \end{abstract}
\maketitle

 Let $G,H$ be two groups. Denote by $H^G$ the group of all maps $f:G\to H$ with \emph{finite support},
 i.e. such that $f(x)=1$ for all but a finite set of elements of $G$. Recall that their (\emph{restricted regular})
 \emph{wreath product} $W=H\wr G$ is defined as the semidirect product $H^G\rtimes G$ with the
 natural action of $G$ on $H^G$: $f^g(a)=f(ag)$ \cite[p.\,175]{ro}. We are going to find a set of generators
 and relations for $H\wr G$ knowing those for $G$ and $H$. Then we shall extend this result to the
 \emph{multiple wreath products} $\bwr_{k=1}^nG_k=(\dots((G_1\wr G_2)\wr G_3)\dots)\wr G_n$.
 
 If $\fX=\{\lst xn\}$ are generators for $G$ and $\bR=\{\lst Rm\}$ are defining relations for this
 set of generators, we write 
 $$G:=\gnrsuch{\lst xn}{\lst Rm}$$
 or $G:=\gnrsuch{\fX}{\bR}$. A presentation is called \emph{minimal} if neither of the generators $\lst xn$
 nor of the relations $\lst Rm$ can be excluded. We call the set of generators $\fX$ \emph{conormal} if neither
 element $x\in\fX$ belongs to the normal subgroup $N_x$ generated by all $y\in\fX\=\{x\}$. For instance, any
 minimal set of generators of a finite $p$-group $G$ is conormal since their images are linear independent in the
 factorgroup $G/G^p[G,G]$ \cite[Theorem~5.48]{ro}. 
 
 \begin{theorem}\label{main}
 Let $G:=\gnrsuch{\fX}{\bR(\fX)},\ H:=\gnrsuch{\fY}{\bS(\fY)}$ be presentations of $G$ and $H$. Choose
 a subset $T\subseteq G$ such that $T\cap T^{-1}=\0$ and $T\cup T^{-1}=G\=\{1\}$, where
 $T^{-1}=\setsuch{t^{-1}}{t\in T}$. Then the wreath product $W=H\wr G$ has a presentation of the form 
 \begin{multline}\label{e1}
 W:=\gnrsuch{\fX,\fY}{\bR(\fX),\,\bS(\fY),\,[y,t^{-1}zt]=1  \text{\emph{ for all }}y,z\in\fY,\,t\in T}. 
 \end{multline}
 If the given presentations of $G$ and $H$ are minimal and the set of generators $\fY$ is conormal, the
 presentation \eqref{e1} is minimal as well.
 \end{theorem}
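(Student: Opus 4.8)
The plan is to form the group $\widehat W$ defined by the presentation \eqref{e1} and to study the canonical surjection $\pi\colon\widehat W\to W$ obtained by verifying that every listed relation holds in $W=H^G\rtimes G$. Correctness of the presentation rests on the following chain, whose tools are also what I shall reuse for minimality. The retraction $\widehat W\to\widehat G:=\langle\fX\rangle$ sending $\fY\mapsto1$ shows $\widehat G\cong G$, and comparison with $H_1\subseteq W$ shows $\langle\fY\rangle\cong H$. The kernel $\widehat N$ of that retraction is the normal closure of $\langle\fY\rangle$, hence is generated by the conjugates $\widehat H_g=g^{-1}\langle\fY\rangle g$. The commutator relations give $[\,\widehat H_1,\widehat H_c\,]=1$ for every $c\neq1$: for $c\in T$ this is immediate, while for $c\in T^{-1}$ it reduces, after replacing $c$ by $c^{-1}\in T$, to a $T$-relation with the two $\fY$-letters interchanged---this is exactly why a ``half'' $T$ of $G\=\{1\}$ suffices. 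Thus $\widehat N$ is a quotient of $\bigoplus_g\widehat H_g\cong H^G$ and $\pi$ carries it isomorphically onto the base, so $\pi$ is an isomorphism.

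For minimality I treat generators and relations separately, exhibiting in each case a quotient of $\widehat W$ that detects the item to be removed. The generators in $\fX$ are indispensable because the retraction $W\to G$ would otherwise force $\fX\=\{x\}$ to generate $G$, against minimality of $G$. For a generator $y\in\fY$ I use conormality: the quotient map $H\to H/N_y$ induces $W\to(H/N_y)\wr G$, under which every generator except $y$ lands in the top subgroup $G$, while $y$ maps to a nontrivial element of the base; hence $(\fX\cup\fY)\=\{y\}$ cannot generate $W$.

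The relations fall into three families. Deleting a relation $S_j\in\bS$ leaves \emph{precisely} the presentation \eqref{e1} for $H'\wr G$ with $H':=\gnrsuch{\fY}{\bS\=\{S_j\}}$ (the same $\fX,\bR,T$), so by the first part that group is $H'\wr G$; since minimality of $H$ makes $H'\to H$ non-injective, the induced map $H'\wr G\to W$ kills part of the base and $S_j$ is not redundant. Deleting a relation $R_i\in\bR$ is handled instead through the retraction onto $G':=\gnrsuch{\fX}{\bR\=\{R_i\}}$: were the relation redundant, $\pi$ would restrict to an isomorphism $\langle\fX\rangle\to G$ fixing $\fX$, whence $G'\cong G$ by the identity on $\fX$, contradicting minimality of $G$.

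The remaining, and hardest, case is a commutator relation $[y_0,t_0^{-1}z_0t_0]=1$; let $g_0\neq1$ be the copy in which $t_0^{-1}z_0t_0$ lives. I must produce a group in which this commutator is nontrivial while every other relation of \eqref{e1} survives. The plan is to build a $G$-equivariant witness $V=M\rtimes G$, sending $\fX$ onto $G$ (so that $\bR$ and the action hold automatically) and $\fY$ into one copy $M_1$ equal to the conormal quotient $D=H/(N_{y_0}\cap N_{z_0})$, in which both $\bar y_0$ and $\bar z_0$ are nontrivial precisely because $\fY$ is conormal. The base $M$ is then assembled from the copies $D_g$ so that every cross-copy pair of generators commutes except those in the single $G$-orbit of $(\bar y_0^{(1)},\bar z_0^{(g_0)})$, the latter realized inside a quotient of the free product $D*D$ of the two relevant copies in which exactly $\bar y_0$ and $\bar z_0$ fail to commute. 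The main obstacle is to check that this selective, \emph{generator-level} partial commutativity is simultaneously compatible with the relations $\bS$ holding in each copy and with $G$-equivariance; conormality is exactly the hypothesis that keeps $\bar y_0,\bar z_0\neq1$ and hence the targeted commutator alive---if $y_0$ lay in the normal closure $N_{y_0}$ of the other generators it would be a product of their conjugates, all forced to commute with $\bar z_0^{(g_0)}$, and the relation would collapse to a consequence of the others. Granting this construction, the map from the group presented by \eqref{e1} with that single commutator deleted to $V$ sends the deleted commutator to a nontrivial element, which completes the proof of minimality.
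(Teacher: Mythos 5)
Your treatment of the correctness of the presentation, of the indispensability of the generators, and of the removability of relations in $\bR$ and $\bS$ is sound and essentially parallels the paper: correctness is the paper's Lemma~\ref{l1} plus Corollary~\ref{c1} (Tietze elimination of the copies $y(a)$, $a\ne1$, with the same observation that $[y,t^{-1}zt]=1$ and $[z,tyt^{-1}]=1$ are equivalent, so half of $G\setminus\{1\}$ suffices), and the paper dismisses the easy minimality cases as obvious; your derivation of the $\bS$-case by re-applying the first part to $H':=\gnrsuch{\fY}{\bS\setminus\{S_j\}}$ is a clean way of making that explicit. The problem is the case you yourself call the hardest: your argument there ends with ``Granting this construction,'' and what you grant is precisely the crux of the entire minimality proof. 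Moreover, your stated ``main obstacle'' --- compatibility of the selective partial commutativity with $\bS$ and with $G$-equivariance --- is not where the difficulty lies. There is no compatibility issue: one simply \emph{defines} the base by a presentation, as the paper does with $K_{c,u,v}$ (generators $\fY(a)$, $a\in G$; relations $\bS(\fY(a))$ and $[y(a),z(ta)]=1$ for all $t\in T$ with $(t,y,z)\ne(c,u,v)$). Since the omitted relations form a full $G$-orbit --- a point you did correctly notice --- the action $h(a)^g=h(ag)$ is automatically well defined, and $Q=K_{c,u,v}\rtimes G$ satisfies every relation of \eqref{e1} except the targeted one.

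What is genuinely needed, and missing from your proposal, is a proof that the surviving commutator $[u(1),v(c)]$ is actually nontrivial in this presented group. Your appeal to conormality (``if $y_0$ lay in the normal closure of the other generators \dots the relation would collapse'') explains why conormality is \emph{necessary}, not why it \emph{suffices}; likewise, the existence of a quotient of $D*D$ ``in which exactly $\bar y_0$ and $\bar z_0$ fail to commute'' is asserted, not proved. The paper's Lemma~\ref{l2} and Corollary~\ref{c2} supply the missing idea: send every generator to $1$ except $u(1)\mapsto\bar u$, the generator of the cyclic group $C=H/N_u$, and $v(c)\mapsto\bar v'$, the generator of $C'=H/N_v$, the target being the free product $C*C'$. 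Conormality says exactly that $\bar u\ne1$ and $\bar v'\ne1$; one checks the assignment kills every imposed relation (the verification uses $T\cap T^{-1}=\0$: in a relation $[y(a),z(ta)]$ both entries can map nontrivially only when $a=1$, $t=c$, $(y,z)=(u,v)$, which is excluded), and in a free product of two nontrivial groups a nontrivial element of one factor never commutes with a nontrivial element of the other, whence $[\bar u,\bar v']\ne1$. Until you supply this (or an equivalent) nontriviality argument, your proof of minimality is incomplete, even though the surrounding architecture --- witness of the form $M\rtimes G$, deletion of one full orbit of commutator relations --- is exactly right.
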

 
 \begin{theorem}\label{mult}
 Let $G_i:=\gnrsuch{\fX_i}{\bR_i(\fX_i)}$ be presentations of the groups $G_i\ (1\le i\le m)$. For
 $1<i\le m$ choose a subset $T_i\subseteq G_i$ such that $T_i\cap T_i^{-1}=\0$ and
 $T_i\cup T_i^{-1}=G_i\=\{1\}$. Then the wreath product $W=\bwr_{i=1}^mG_i$ has a presentation of the form 
 \begin{multline}\label{e2}
 W:=\gnrsuch{\fX_i,\ 1\le i\le m}{\bR_i(\fX_i),\ 1\le i\le m;\,[x,t^{-1}yt]=1\\
 \text{\emph{ for all }} x,y\in\mbox{\boldmath$\cup$}_{i<j}\,\fX_i,\,t\in T_j}. 
 \end{multline}
  If all given presentations of $G_i$ are minimal and the sets of generators $\fX_i\ ( 1\le i<n)$
 are conormal, the presentation \eqref{e2} is minimal as well.
  \end{theorem}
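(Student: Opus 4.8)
The plan is to prove both assertions by induction on $m$, using Theorem~\ref{main} at the inductive step applied to the splitting
$\bwr_{i=1}^m G_i=\left(\bwr_{i=1}^{m-1}G_i\right)\wr G_m$. Write $W_{m-1}=\bwr_{i=1}^{m-1}G_i$, so that $W=W_{m-1}\wr G_m$. For the presentation \eqref{e2}, the case $m=1$ is just the given presentation of $G_1$ (the commutator block is vacuous). For the step I would assume \eqref{e2} for $W_{m-1}$, giving a presentation $W_{m-1}=\gnrsuch{\fY}{\bS(\fY)}$ with $\fY=\mbox{\boldmath$\cup$}_{i<m}\fX_i$ and $\bS$ consisting of the $\bR_i\ (i\le m-1)$ together with the commutators $[x,t^{-1}yt]=1$ for $x,y\in\mbox{\boldmath$\cup$}_{i<j}\fX_i$, $t\in T_j$, $2\le j\le m-1$. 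Applying Theorem~\ref{main} with $H=W_{m-1}$, $G=G_m$, $T=T_m$ then produces the generators $\fX_m\cup\fY$ and the relations $\bR_m(\fX_m)$, $\bS(\fY)$, and $[y,t^{-1}zt]=1$ for $y,z\in\fY$, $t\in T_m$. Since $\fY=\mbox{\boldmath$\cup$}_{i<m}\fX_i$, this last family is exactly the $j=m$ block of \eqref{e2}, and together with $\bS$ it recovers \eqref{e2} verbatim.

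For minimality I would again induct, invoking the minimality clause of Theorem~\ref{main} at the top step. That clause requires minimal presentations of $G_m$ and of $W_{m-1}$ (the hypothesis and the inductive hypothesis, respectively) together with conormality of the base generating set $\fY=\mbox{\boldmath$\cup$}_{i<m}\fX_i$ of $H=W_{m-1}$. The entire difficulty is concentrated in the last point, which I would isolate as a lemma: \emph{if the images of $\fY$ are independent in $H^{\mathrm{ab}}$ and $\fX$ is conormal in $G$, then $\fX\cup\fY$ is conormal in $H\wr G$}, where $\fX$ sits in the top copy of $G$ and each $y\in\fY$ is identified with the function $\delta_{1,y}$ supported at $1\in G$. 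Iterating this lemma along $W_k=W_{k-1}\wr G_k$ yields the conormality of $\mbox{\boldmath$\cup$}_{i<m}\fX_i$ in $W_{m-1}$ needed above. For a top generator $x\in\fX$ conormality is immediate from the projection $\pi\colon H\wr G\to G$ with kernel $H^G$: it sends the remaining base generators to $1$ and the remaining top generators to $\fX\=\{x\}$, so conormality of $\fX$ in $G$ gives $\pi(x)\notin\pi(N_x)$ and hence $x\notin N_x$.

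The base-generator case is the heart of the matter. For $y_0\in\fY$, writing $N=\langle\langle\,\fX\cup(\fY\=\{y_0\})\,\rangle\rangle$, I would compute $N\cap H^G$ inside the base group. Conjugating the top generators by elements of $H^G$ yields precisely the ``dipole'' subgroup $D$ generated by the $\delta_{a,h}\delta_{b,h^{-1}}$ $(a\neq b)$, while conjugating the base generators $\delta_{1,y}$ $(y\neq y_0)$ yields the $\delta_{g,y'}$ with $y'$ conjugate to $y$ in $H$. A short calculation shows that $H^G/D\cong H^{\mathrm{ab}}$ via the total augmentation $f\mapsto\sum_{g}\overline{f(g)}$ (which is $G$-invariant), and that under this map $N\cap H^G$ is exactly the preimage of the subgroup generated by the images of $\fY\=\{y_0\}$. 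Hence $\delta_{1,y_0}\in N$ if and only if the image of $y_0$ in $H^{\mathrm{ab}}$ lies in the subgroup generated by the images of $\fY\=\{y_0\}$, and the independence hypothesis excludes this.

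The expected obstacle is thus the passage through the abelianization: conormality of the \emph{base} generators in $H\wr G$ is governed not by conormality of $\fY$ in $H$ but by independence of the images of $\fY$ in $H^{\mathrm{ab}}$. I would close this gap for the intended setting by using $(H\wr G)^{\mathrm{ab}}\cong H^{\mathrm{ab}}\times G^{\mathrm{ab}}$, whence $(W_{m-1})^{\mathrm{ab}}\cong\prod_{i<m}G_i^{\mathrm{ab}}$ and the required independence reduces to independence of the images of each $\fX_i$ in $G_i^{\mathrm{ab}}$. For a minimal, hence conormal, generating set of a finite $p$-group this holds automatically, since the images are already linearly independent in $G_i/G_i^p[G_i,G_i]$, which surjects onto $G_i^{\mathrm{ab}}$. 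With the conormality of $\mbox{\boldmath$\cup$}_{i<m}\fX_i$ in $W_{m-1}$ in hand, the minimality clause of Theorem~\ref{main} completes the induction.
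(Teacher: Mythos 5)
Your overall strategy is the paper's own: induct on $m$ via $W=W_{m-1}\wr G_m$, apply Theorem~\ref{main} at the top step, and isolate a lemma transferring conormality from $\fX$ in $G$ and from $\fY$ in $H$ to $\fX\cup\fY$ in $H\wr G$ (this is the paper's Lemma~\ref{l3}; your projection argument for the top generators is verbatim its first half, and your dipole/total-augmentation computation of $N\cap H^G$ is a correct, more detailed version of its second half — note in passing that your subgroup $D$ really is normal, since products of dipoles already yield all $\delta_{b,[h,k]}$, so $H^G/D\cong H^{\mathrm{ab}}$ does hold as you claim, at least once $G\ne\{1\}$). The one genuine flaw is your final paragraph. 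You assert that conormality of the base generators in $H\wr G$ is ``governed not by conormality of $\fY$ in $H$ but by independence of the images of $\fY$ in $H^{\mathrm{ab}}$,'' treat the latter as a strictly stronger hypothesis, and then secure it only for finite $p$-groups via the Frattini quotient. As written, your argument therefore proves the minimality assertion only under that extra hypothesis, whereas the theorem claims it for arbitrary groups with minimal presentations and conormal $\fX_i$ $(i<m)$.

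The gap you perceive is illusory: for a \emph{generating} set $\fY$ of $H$ the two conditions are equivalent. If $N_y$ denotes the normal closure of $\fY\=\{y\}$, then $H/N_y$ is generated by the single element $yN_y$, hence cyclic, hence abelian, so $[H,H]\subseteq N_y$; consequently $y\in N_y$ if and only if the image of $y$ in $H^{\mathrm{ab}}$ lies in the subgroup generated by the images of the remaining generators. This cyclic-quotient observation is exactly the engine of the paper's Lemma~\ref{l3}: there $C=H/N_y$ is commutative, which is precisely what makes the total augmentation $C\wr G\to C$, $(f,g)\mapsto\prod_{x\in G}f(x)$, a homomorphism killing $\fX$ and $\fY\=\{y\}$ but not $y$. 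Insert this one remark before your last paragraph and delete the $p$-group detour (your appeal to $(W_{m-1})^{\mathrm{ab}}\cong\prod_{i<m}G_i^{\mathrm{ab}}$ is sound but unnecessary): your proof then establishes the theorem in full generality, by essentially the same route as the paper.
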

 
 In what follows, we keep the notations of Theorem~\ref{main}.
 Note that $H^G=\bigoplus_{a\in G}H(a)$, where $H(a)$ is a copy of the group $H$; the elements of $H(a)$
 will be denoted by $h(a)$, where $h$ runs through $H$. Then $h(a)^g=h(ag)$ and
 $H^G=\gnrsuch{\fY(a)}{\bS(\fY(a)),\ [y(a),z(b)]=1}$, where $a,b\in G,\,a\ne b$. The following lemma is quite evident.
 
 \begin{lemma}\label{l1}
 Suppose a group $\,G$ acting on a group $N$. Let $G=\gnrsuch{\fX}{\bR(\fX)}$,
  $N=\gnrsuch{\fY}{\bS(\fY)}$ be presentations of $\,G$ and $\,N$, and $y^x=w_{xy}(\fY)$ for each
 $x\in\fX,\,y\in\fY$. Then their semidirect product $N\rtimes G$ has a presentation
	\[
	N\rtimes G:=\gnrsuch{\fX,\fY}{\bR(\fX),\,\bS(\fY),\,x^{-1}yx=w_{xy}(\fY)
	 \text{\emph{ for all }} x\in\fX,\,y\in\fY}.
  \] 
 \emph{Note that this presentation may not be minimal even if both presentations for $G$ and $N$ were so,
 since some elements of $\fY$ may become superfluous.}
 \end{lemma}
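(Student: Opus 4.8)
The plan is to prove that the group $P$ defined by the right-hand presentation is isomorphic to $N\rtimes G$, by constructing a pair of mutually inverse homomorphisms; this is the standard von~Dyck-plus-universal-property argument, which is what makes the lemma ``quite evident.''

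First I would produce a surjection $\phi\colon P\to N\rtimes G$. By von~Dyck's theorem it suffices to check that each defining relation of $P$ holds in $N\rtimes G$ once $\fX$ and $\fY$ are read as the corresponding generators of the subgroups $G$ and $N$. The relations $\bR(\fX)$ and $\bS(\fY)$ hold because $G$ and $N$ sit inside $N\rtimes G$ as subgroups; and the relation $x^{-1}yx=w_{xy}(\fY)$ merely records the hypothesis $y^x=w_{xy}(\fY)$, since conjugation by $x\in G$ inside the semidirect product realises exactly the given action of $G$ on $N$. As $\fX\cup\fY$ generates $N\rtimes G$, the homomorphism $\phi$ fixing every generator is onto.

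For the inverse I would invoke the universal property of the semidirect product. Because $\fY$ satisfies $\bS(\fY)$ inside $P$, the assignment $y\mapsto y$ extends to a homomorphism $\alpha\colon N\to P$; likewise $\bR(\fX)$ yields $\beta\colon G\to P$. The conjugation relations say $\beta(x)^{-1}\alpha(y)\beta(x)=\alpha(y^x)$ for all $x\in\fX,\ y\in\fY$, i.e. $\alpha$ and $\beta$ are compatible with the action on the level of generators. Fixing $x$, both $n\mapsto\beta(x)^{-1}\alpha(n)\beta(x)$ and $n\mapsto\alpha(n^x)$ are homomorphisms $N\to P$ agreeing on $\fY$, hence on all of $N$; running this along a word in $\fX$ and its inverses promotes the compatibility from single generators $x$ to arbitrary $g\in G$. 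Thus $\alpha,\beta$ combine into a single homomorphism $\psi\colon N\rtimes G\to P$.

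Finally, $\phi\circ\psi$ and $\psi\circ\phi$ fix every generator and are therefore the respective identity maps, so $\phi$ is an isomorphism and the displayed presentation is valid. The only genuine point of care, and the step I expect to be the main (if mild) obstacle, is the propagation just sketched: checking that the relations imposed \emph{only} for the generators $x\in\fX,\ y\in\fY$ already force the full compatibility $\beta(g)^{-1}\alpha(n)\beta(g)=\alpha(n^g)$ for all $g\in G,\ n\in N$ that is needed to apply the universal property. Everything else is bookkeeping with the convention $a^b=b^{-1}ab$.
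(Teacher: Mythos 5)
Your proof is correct. Note, though, that the paper gives no proof at all for this lemma: it is introduced with the remark that it is ``quite evident,'' so there is nothing to compare against line by line; what you have written is the standard complete argument (von Dyck in one direction, the universal property of the semidirect product in the other) that the authors chose to suppress. You correctly identify the one genuine point of care, namely promoting the relations $x^{-1}yx=w_{xy}(\fY)$, imposed only for generators, to the full compatibility $\beta(g)^{-1}\alpha(n)\beta(g)=\alpha(n^g)$ for all $g\in G$, $n\in N$. One detail worth making explicit in your ``running along a word in $\fX$ and its inverses'': the induction over a product of generators is immediate, but the step for an inverse $x^{-1}$ is not a formal consequence of the relation for $x$ alone --- it uses that the identity has first been established for \emph{all} $n\in N$ (not just for $n\in\fY$) together with the fact that $n\mapsto n^{x}$ is an automorphism of $N$. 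Indeed, applying $\beta(x)^{-1}\alpha(m)\beta(x)=\alpha(m^{x})$ with $m=n^{x^{-1}}$ and conjugating back gives $\beta(x)\alpha(n)\beta(x)^{-1}=\alpha(n^{x^{-1}})$; equivalently, conjugation by $\beta(x)$ maps the subgroup $\alpha(N)$ \emph{onto} itself, not merely into itself, precisely because the action is invertible. With that two-line computation inserted, your argument is complete and fully justifies the presentation.
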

 
 \begin{corol}\label{c1}
 The wreath product $W=H\wr G$ has indeed a presentation \eqref{e1}.
 \end{corol}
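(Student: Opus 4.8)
The plan is to derive the presentation \eqref{e1} from Lemma~\ref{l1} by a sequence of Tietze transformations. First I would apply Lemma~\ref{l1} to the semidirect product $W=H^G\rtimes G$, taking $G=\gnrsuch{\fX}{\bR(\fX)}$ together with the presentation $H^G=\gnrsuch{\fY(a)}{\bS(\fY(a)),\ [y(a),z(b)]=1}$ ($a\ne b$) recorded just before the lemma. Since the action reads $y(a)^x=y(ax)$ for $x\in\fX$, the conjugation relations furnished by the lemma become $x^{-1}y(a)x=y(ax)$. This yields a presentation of $W$ on the highly redundant generating set $\fX\cup\setsuch{y(a)}{y\in\fY,\ a\in G}$, and the task is to trim it down to $\fX\cup\fY$, where $\fY:=\fY(1)$.

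Next I would eliminate every generator $y(a)$ with $a\ne 1$. Writing each $a\in G$ as a word in $\fX$ and iterating the conjugation relations, one obtains $y(a)=a^{-1}y(1)a$, a word in $\fX\cup\fY$; these equalities are exactly the Tietze substitutions used to discard the superfluous $y(a)$. After the substitution I would verify that the relations transform as follows: $\bR(\fX)$ is untouched; each family $\bS(\fY(a))$ turns into the conjugate $a^{-1}\bS(\fY)a$, hence is a consequence of $\bS(\fY)=\bS(\fY(1))$ and may be dropped for $a\ne1$; and every conjugation relation $x^{-1}y(a)x=y(ax)$ collapses to the tautology $(ax)^{-1}y(1)(ax)=(ax)^{-1}y(1)(ax)$ and disappears. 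What survives, besides $\bR(\fX)$ and $\bS(\fY)$, are the transformed commuting relations.

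The main work, and the step I expect to be the real obstacle, is showing that these commuting relations reduce precisely to those in \eqref{e1}. Under the substitution, $[y(b),z(a)]=1$ becomes $[b^{-1}yb,\,a^{-1}za]=1$; conjugating by $b$ this is equivalent to $[y,\,(ab^{-1})^{-1}z(ab^{-1})]=1$, i.e. $[y,c^{-1}zc]=1$ with $c=ab^{-1}$, and as $(a,b)$ ranges over distinct pairs $c$ ranges over all of $G\=\{1\}$. So it suffices to impose $[y,c^{-1}zc]=1$ for all $y,z\in\fY$ and all $c\ne1$. Here the choice of $T$ enters: for $c=t^{-1}\in T^{-1}$ the relation $[y,tzt^{-1}]=1$ is, after conjugation by $t^{-1}$, the same as $[t^{-1}yt,z]=1=[z,t^{-1}yt]=1$, which is already present as the relation for the swapped pair $(z,y)$ and the element $t\in T$. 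Since $T\cup T^{-1}=G\=\{1\}$ and $T\cap T^{-1}=\0$, exactly the relations $[y,t^{-1}zt]=1$ with $t\in T$ (and $y,z$ ranging over all of $\fY$) remain, giving \eqref{e1}. The only point demanding care is this bookkeeping: checking that the $c$- and $c^{-1}$-relations coincide after interchanging $y$ and $z$, so that the partition $G\=\{1\}=T\sqcup T^{-1}$ removes each redundant relation exactly once without discarding any essential one.
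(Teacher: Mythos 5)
Your proposal is correct and follows essentially the same route as the paper's own proof: apply Lemma~\ref{l1} to $W=H^G\rtimes G$, eliminate the generators $y(a)$ with $a\ne1$ via the conjugation relations $y(a)=a^{-1}y(1)a$, rewrite the commuting relations as $[y,c^{-1}zc]=1$ with $c\ne1$, and use the equivalence of the $(y,z,c)$-relation with the $(z,y,c^{-1})$-relation to cut down to $c\in T$. The additional bookkeeping you supply (that $\bS(\fY(a))$ becomes a conjugate of $\bS(\fY)$ and the conjugation relations become tautologies) is accurate and merely makes explicit what the paper leaves implicit in the phrase ``we can exclude all generators $y(a)$.''
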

 \begin{proof}
 Lemma~\ref{l1} gives a presentation
 \begin{multline*}
	W:=\gnrsuch{\fX,\,\fY(a)}{\bR(\fX),\,\bS(\fY(a)),\,[y(a),z(b)]=1,\,\\ x^{-1}y(a)x=y(ax)
	 \text{ for } x\in\fX,\,y,z\in\fY,\,a,b\in G,\,a\ne b }.
 \end{multline*}
 Using the last relations, we can exclude all generators $y(a)$ for $a\ne1$; we only have to replace
 $y(a)$ and $z(b)$ by $a^{-1}y(1)a$ and $b^{-1}z(1)b$. So we shall write $h$ instead of $h(1)$ for
 $h\in H$; especially, the relations for $y(a)$ and $z(b)$ are rewritten as $[a^{-1}ya,b^{-1}zb]=1$. 
 The latter is equivalent to $[y,t^{-1}zt]=1$, where $t=ba^{-1}\ne1$. Moreover, the relations $[y,t^{-1}zt]=1$
 and $[z,t yt^{-1}]=1$ are also equivalent; therefore we only need such relations for $t\in T$.
 \end{proof}
 
 \begin{lemma}\label{l2}
 Suppose that $\fY$ is a conormal set of generators of the group $H$, $u,v\in\fY$, and consider the group
 $H_{u,v}=(H*H')/N_{u,v}$, where $*$ denotes the free product of groups, $H'$ is a copy of the group $H$
 whose elements are denoted by $h'\ (h\in H)$, and $N_{u,v}$ is the normal subgroup of $H*H'$ generated
 by the commutators $[y,z']$ with $y,z\in\fY,\,(y,z)\ne(u,v)$. Then $[u,v']\ne1$ in $H_{u,v}$.
 \end{lemma}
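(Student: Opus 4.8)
The plan is to prove nontriviality of $[u,v']$ by exhibiting an explicit quotient of $H_{u,v}$ in which it survives. Concretely, I would construct a group $K$ and a homomorphism $\theta\colon H*H'\to K$ that sends every generating commutator $[y,z']$ of $N_{u,v}$ (those with $(y,z)\ne(u,v)$) to $1$, while sending $[u,v']$ to a visibly nontrivial element. Since $N_{u,v}$ is the \emph{normal} closure of those commutators and $\theta$ kills them, $\theta$ kills $N_{u,v}$ and therefore factors through a homomorphism $\bar\theta\colon H_{u,v}\to K$; the image $\bar\theta([u,v'])\ne1$ then forces $[u,v']\ne1$ in $H_{u,v}$.

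For the target $K$ I would use the conormality hypothesis. Let $N_u\nor H$ and $N_v\nor H$ denote the normal subgroups generated by $\fY\=\{u\}$ and $\fY\=\{v\}$ respectively, and set $C=H/N_u$, $D=H/N_v$. In $C$ every generator except $u$ dies, so $C=\langle\bar u\rangle$ is cyclic; conormality of $\fY$ gives $u\notin N_u$, so $\bar u\ne1$ and $C$ is a nontrivial cyclic group. Likewise $D=\langle\bar v\rangle$ is nontrivial cyclic. I would then take $K=C*D$, the free product, and define $\theta$ by letting $\theta|_H$ be the composite $H\twoheadrightarrow C\hookrightarrow K$ and $\theta|_{H'}$ be $H'\cong H\twoheadrightarrow D\hookrightarrow K$ (so $h'\mapsto$ the class of $h$ in $D$); the universal property of the free product assembles these into $\theta$ on $H*H'$.

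The verification that $\theta$ kills the required commutators is then immediate and is where one checks the construction is correct rather than where the difficulty lies: for any pair $(y,z)\ne(u,v)$ either $y\ne u$, in which case $\theta(y)=1$ in $C\subseteq K$, or $z\ne v$, in which case $\theta(z')=1$ in $D\subseteq K$; either way $\theta([y,z'])=1$. Finally $\bar\theta([u,v'])=[\bar u,\bar v]=\bar u\,\bar v\,\bar u^{-1}\,\bar v^{-1}$, and since $\bar u\ne1$ in $C$ and $\bar v\ne1$ in $D$ this is a reduced word of syllable length four in the free product $K=C*D$, hence $\ne1$ by the normal form theorem for free products. I expect the only genuine idea here to be the choice of target: routing through the free product of the two cyclic quotients $H/N_u$ and $H/N_v$ is exactly what converts the single surviving commutator into a nontrivial reduced word, and conormality is precisely the input guaranteeing that both factors---and hence the commutator---are nontrivial.
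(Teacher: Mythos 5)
Your proof is correct and is essentially the paper's own argument: the paper likewise maps $H*H'$ to the free product $P=(H/N_u)*(H'/N_{v'})$ by killing every generator of $\fY$ except $u$ (resp.\ every $z'$ except $v'$), uses conormality to see that $\bar u\ne1$ and $\bar v'\ne1$, and concludes $[\bar u,\bar v']\ne1$ in $P$. Your only (harmless) variation is to assemble the map as a composite of the two quotient maps via the universal property of the free product, which makes explicit the well-definedness that the paper merely calls obvious.
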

 \begin{proof}
 Let $C=H/N_u,\,C'=H'/N_{v'},\,P=C*C'$, $\bar u=uN_u,\ \bar v'=v'N_{v'}$. Consider the homomorphism
 $\phi$ of $H*H'$ to $P$ such that
 \begin{align*}
 \phi(y)&=\begin{cases}
			1 &\text{if } y\in\fY\=\{u\},\\
			\bar u &\text{if } y=u,
     \end{cases} \\
 \phi(z')&=\begin{cases}
			 1 &\text{if } z\in\fY\=\{v\},\\
			 \bar v' &\text{if } z=v.	
				\end{cases}
 \end{align*}
 Obviously, $\phi$ is well defined and $\phi([y,z'])=1$ if $(y,z)\ne (u,v)$, so it induces a homomorphism
 $H_{u,v}\to P$. Since $\phi([u,v'])=[\bar u,\bar v']\ne1$, it accomplishes the proof.
 \end{proof}
 
 Now fix elements $c\in T,\,u,v\in\fY$, and let $K_{c,u,v}$ be the group with a presentation
 \begin{multline*}
  K_{c,u,v}:=\gnrsuch{\fY(a),\ a\in G}{\bS(\fY(a)),\,[y(a),z(ta)]=1 \\
 \text{ for all } y,z\in\fY,\,a\in G,\,t\in T,\,(t,y,z)\ne(c,u,v)}.
 \end{multline*}

 \begin{corol}\label{c2}
 Let the set of generators $\fY$ be conormal. Then
 $[u(1),v(c)]\ne 1$ in the group $K_{c,u,v}$.
 \end{corol}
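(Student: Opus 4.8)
The plan is to exploit Lemma~\ref{l2} by building a homomorphism from $K_{c,u,v}$ to $H_{u,v}$ that carries the commutator $[u(1),v(c)]$ to $[u,v']$, which is nontrivial by that lemma. First I would define a map $\psi$ on the generating set by $\psi(y(1))=y$, $\psi(y(c))=y'$, and $\psi(y(a))=1$ for every $a\in G\=\{1,c\}$ and every $y\in\fY$, where $y$ and $y'$ denote the images of the two copies $H,H'$ inside $H_{u,v}$. Since a homomorphism out of $K_{c,u,v}$ is determined by images of generators subject to the defining relations, everything reduces to checking that $\psi$ kills every relator.

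The relations $\bS(\fY(a))$ cause no trouble: for $a=1$ and $a=c$ they land among $\bS(\fY)$ and $\bS(\fY')$, which already hold in $H_{u,v}$, while for the remaining $a$ every generator goes to $1$, so each relator becomes the empty word. The genuine work is with the commutator relations $[y(a),z(ta)]=1$ carrying $(t,y,z)\ne(c,u,v)$. Here the key observation is that the image $[\psi(y(a)),\psi(z(ta))]$ can be nontrivial only when \emph{both} indices $a$ and $ta$ lie in $\{1,c\}$, since otherwise one of the two factors is mapped to $1$.

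This is where the hypotheses $T\cap T^{-1}=\0$ (hence $1\notin T$, and $c^{-1}\notin T$ as $c\in T$) become decisive. Running through the four combinations with $a,ta\in\{1,c\}$: the pairs $(a,ta)=(1,1)$ and $(c,c)$ force $t=1\notin T$, the pair $(c,1)$ forces $t=c^{-1}\notin T$, and only $(a,ta)=(1,c)$, i.e.\ $t=c$, survives. For this surviving case the image of the relation is $[y,z']$, and the constraint $(t,y,z)\ne(c,u,v)$ combined with $t=c$ reduces exactly to $(y,z)\ne(u,v)$---precisely the condition under which $[y,z']=1$ already holds in $H_{u,v}$ by the definition of $N_{u,v}$. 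Thus every defining relator of $K_{c,u,v}$ maps to $1$, and $\psi$ is well defined.

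Finally, $\psi([u(1),v(c)])=[u,v']\ne1$ in $H_{u,v}$ by Lemma~\ref{l2}, whence $[u(1),v(c)]\ne1$ in $K_{c,u,v}$. The one point demanding care throughout is the bookkeeping in the preceding paragraph: one must confirm that the single excluded triple $(c,u,v)$ coincides with the unique relation whose image could be the nontrivial commutator $[u,v']$, so that no actual relation of $K_{c,u,v}$ is violated by $\psi$.
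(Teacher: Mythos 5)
Your proof is correct, and it follows the paper's overall strategy: build a homomorphism from $K_{c,u,v}$ to the group $H_{u,v}$ of Lemma~\ref{l2} and observe that $[u(1),v(c)]$ is sent to $[u,v']\ne1$. But your homomorphism is not the paper's, and the difference is substantive. The paper's $\psi$ sends \emph{only} $u(1)\mapsto u$ and $v(c)\mapsto v'$, with every other generator $y(a)\mapsto1$, and asserts its existence without checking relations; as literally stated this assignment need not respect $\bS(\fY(1))$. Indeed, a relator of $\bS$ evaluated with $u(1)\mapsto u$ and all other $y(1)\mapsto1$ becomes a power of $u$ which vanishes in $C=H/N_u$ but not necessarily in $H_{u,v}$, where $H$ embeds (all the commutators $[y,z']$ die under $H*H'\to H\times H'$, so $H_{u,v}$ retracts onto $H$). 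For example, $H=Q_8=\gnrsuch{u,y}{u^4=1,\,u^2=y^2,\,y^{-1}uy=u^{-1}}$ has a conormal generating set $\{u,y\}$, yet the relator $u^2y^{-2}$ would map to $u^2\ne1$ in $H_{u,v}$. (The paper's assignment does become legitimate if one composes with the map $\phi$ to $P=C*C'$ from the proof of Lemma~\ref{l2}, since killing $\fY\setminus\{u\}$ is a genuine quotient of $H$; presumably that is what was intended.) Your choice --- $y(1)\mapsto y$ and $y(c)\mapsto y'$ for \emph{all} $y\in\fY$, everything else to $1$ --- sidesteps the issue entirely, because $\bS(\fY)$ and $\bS(\fY')$ hold verbatim in the two free factors. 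Your case analysis of the commutator relations is exactly the verification the paper leaves implicit, and it is the one place the hypothesis $T\cap T^{-1}=\emptyset$ enters: $1\notin T$ disposes of $(a,ta)=(1,1)$ and $(c,c)$, $c^{-1}\notin T$ disposes of $(c,1)$, and the surviving case $t=c$, $a=1$ gives $[y,z']$ with $(y,z)\ne(u,v)$, which lies in $N_{u,v}$ by construction. So your write-up is not merely correct; it supplies (and in effect repairs) the well-definedness check that the paper's two-line proof glosses over.
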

 \begin{proof}
 There is a homomorphism $\psi:K_{c,u,v}\to H_{u,v}$, where $H_{u,v}$ is the group from Lemma~\ref{l2},
 mapping $u(1)\mapsto u$, $v(c)\mapsto v'$, $y(a)\to1$ in all other cases. Then
 $\psi([u(1),v(c)])=[u,v']\ne1$, so $[u(1),v(c)]\ne1$ as well.
 \end{proof}
  
 \begin{corol}\label{c3}
 If the given presentations of $G$ and $H$ are minimal and the set of generators $\fY$ is conormal,
 the presentation \eqref{e1} is minimal.
 \end{corol}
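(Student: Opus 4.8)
The plan is to show separately that no generator and no relation of \eqref{e1} can be dropped, in each case producing an auxiliary quotient of $W$ in which the element under scrutiny survives; conormality will be invoked only where it is indispensable.

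For the generators, the case $x\in\fX$ is handled by the semidirect projection $W=H^G\rtimes G\to G$, which kills $\fY$ and sends $\fX$ to the generators of $G$: were $\langle(\fX\=\{x\}),\fY\rangle=W$, its image $\fX\=\{x\}$ would generate $G$, against minimality of the presentation of $G$. For $y\in\fY$ I would use conormality through the quotient $C=H/N_y$, where $N_y$ is the normal closure of $\fY\=\{y\}$ in $H$; conormality gives $y\notin N_y$, hence $C\ne1$. The surjection $H\twoheadrightarrow C$ induces a surjection $\pi\colon W\twoheadrightarrow C\wr G$ which is the identity on $G$ and kills every element of $\fY\=\{y\}$ (all of which lie in $N_y$). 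Thus $\pi\bigl(\langle\fX,\fY\=\{y\}\rangle\bigr)=G\subsetneq C\wr G=\pi(W)$, so $\fX\cup(\fY\=\{y\})$ does not generate $W$.

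For the relations inherited from $G$ and $H$, fix $R_0\in\bR$ and let $W_{-R_0}$ be the group presented by \eqref{e1} with $R_0$ removed. Killing $\fY$ in $W_{-R_0}$ yields $G':=\gnrsuch{\fX}{\bR\=\{R_0\}}$, while killing $\fY$ in $W$ yields $G$; minimality of the presentation of $G$ gives $G'\ne G$, so $W_{-R_0}\ne W$ and $R_0$ is irredundant. Symmetrically, for $S_0\in\bS$ set $H':=\gnrsuch{\fY}{\bS\=\{S_0\}}$. The assignment sending $\fX$ to the top generators and $\fY$ to the generators of $H'$ respects every relation of \eqref{e1} except $S_0$ --- the deleted commutator-type relations hold in $H'\wr G$ because $t\ne1$ places $y(1)$ and $z(t)$ in distinct copies --- so it defines a homomorphism from the $S_0$-deleted presentation onto $H'\wr G$ carrying the word $S_0$ into the bottom copy $H'(1)\cong H'$, where it is nontrivial by minimality of the presentation of $H$. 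Hence $S_0$ cannot be dropped.

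The crux is the irredundancy of a commutator relation $[u,c^{-1}vc]=1$ ($u,v\in\fY$, $c\in T$), and here Corollary~\ref{c2} is exactly the tool. Let $W_{-(c,u,v)}$ denote \eqref{e1} with this single relation deleted. The key step is to identify $W_{-(c,u,v)}\cong K_{c,u,v}\rtimes G$: the relation set defining $K_{c,u,v}$ is $G$-invariant, so Lemma~\ref{l1} applies, and eliminating the generators $y(a)$ ($a\ne1$) exactly as in the proof of Corollary~\ref{c1} presents $K_{c,u,v}\rtimes G$ on $\fX\cup\fY$ by $\bR(\fX)$, $\bS(\fY)$ and precisely the commutators $[y,t^{-1}zt]=1$ with $(t,y,z)\ne(c,u,v)$. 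The point is that the full $G$-orbit of relations $\{[u(a),v(ca)]=1:a\in G\}$ omitted in forming $K_{c,u,v}$ is exactly the set of $G$-conjugates of the one relation $[u(1),v(c)]=1$ deleted from \eqref{e1}. Under this isomorphism $[u,c^{-1}vc]$ becomes $[u(1),v(c)]$, which is nontrivial in $K_{c,u,v}$ by Corollary~\ref{c2} and hence in $K_{c,u,v}\rtimes G$. Therefore no commutator relation can be dropped, and \eqref{e1} is minimal. I expect this last identification --- recognizing that removing one relation in the reduced presentation amounts to removing a whole $G$-orbit in the $H^G$-presentation, so that Corollary~\ref{c2} becomes applicable --- to be the only genuinely delicate point.
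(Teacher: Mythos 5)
Your proposal is correct and takes essentially the same approach as the paper: the crux---irredundancy of each commutator relation $[u,c^{-1}vc]=1$---is handled exactly as in the paper's proof, by forming $Q=K_{c,u,v}\rtimes G$ via Lemma~\ref{l1}, eliminating the generators $y(a)$, $a\ne1$, as in Corollary~\ref{c1} (so that the omitted $G$-orbit $\{[u(a),v(ca)]=1\}$ collapses to the single deleted relation), and invoking Corollary~\ref{c2}. The remaining parts, which the paper dismisses with ``obviously,'' you verify by sound explicit quotient arguments (e.g.\ $W\twoheadrightarrow C\wr G$ with $C=H/N_y$, echoing the paper's Lemma~\ref{l3}, and the surjection onto $H'\wr G$ for a deleted relation $S_0\in\bS$), so nothing is missing.
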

 \begin{proof}
 Obviously, we can omit from \eqref{e1} neither of generators $\fX,\fY$ nor of the relations $\bR(\fX),\bS(\fY)$.
 So we have to prove that neither relation $[u,c^{-1}vc]=1 \  (u,v\in\fY,\,c\in T)$ can be omitted as well. Consider
 the group $K=K_{c,u,v}$ of Corollary~\ref{c2}. The group $G$ acts on $K$ by the rule: $h(a)^g=h(ag)$. Let
 $Q=K\rtimes G$. Then, just as in the proof of Corollary~\ref{c1}, this group has a presentation 
 \begin{multline*}
	Q:=\gnrsuch{\fX,\fY}{\bR(\fX),\,\bS(\fY),\,[y,t^{-1}zt]=1 \\
	\text{ for all } y,z\in\fY,\,t\in T,\,(t,y,z)\ne(c,u,v)},
 \end{multline*}\
 where $y=y(1)$ for all $y\in\fY$, but $[u,c^{-1}vc]=[u(1),v(c)]\ne 1$.
 \end{proof}
 
 Now for an inductive proof of Theorem~\ref{mult} we only need the following simple result.
 
 \begin{lemma}\label{l3}
 If the sets of generators $\fX$ of $G$ and $\fY$ of $H$ are conormal, so is the set of generators
 $\fX\cup \fY$ of $H\wr G$.
 \end{lemma}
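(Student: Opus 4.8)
My plan is to recast conormality as a condition on abelianizations and then read it off a direct-product decomposition of $W/[W,W]$. The first step is a general criterion. Let $L$ be any group generated by a set $\mathbf g$, and for $g\in\mathbf g$ let $N_g$ be the normal subgroup generated by $\mathbf g\=\{g\}$. Since $L/N_g$ is generated by the single image of $g$, it is cyclic, hence abelian, so $[L,L]\subseteq N_g$. Writing $\bar g$ for the image of $g$ in $L/[L,L]$, it follows that $g\in N_g$ if and only if $\bar g$ lies in the subgroup of $L/[L,L]$ generated by the images of $\mathbf g\=\{g\}$ (modulo $[L,L]$ the normal closure coincides with the ordinary subgroup, as the quotient is abelian). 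Thus $\mathbf g$ is conormal exactly when, in $L/[L,L]$, no $\bar g$ belongs to the subgroup spanned by the images of the remaining generators. I would apply this criterion to $G$, $H$ and $W$ alike.

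Next I would abelianize the presentation \eqref{e1}. In $W/[W,W]$ all generators commute, so every relation $[y,t^{-1}zt]=1$ becomes void, while $\bR(\fX)$ and $\bS(\fY)$ collapse to the defining relations of $G/[G,G]$ and $H/[H,H]$. As the surviving relations involve $\fX$ and $\fY$ separately, this gives $W/[W,W]\cong G/[G,G]\times H/[H,H]$, with the images of $\fX$ generating the first factor and those of $\fY$ the second.

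Finally I would combine the pieces, letting $\bar g$ now denote images in $W/[W,W]$. For $x\in\fX$ the images of the other generators span $\langle\,\bar{x'}\mid x'\in\fX\=\{x\}\,\rangle\times H/[H,H]$; conormality of $\fX$ in $G$ and the criterion give $\bar x\notin\langle\,\bar{x'}\mid x'\in\fX\=\{x\}\,\rangle$ in $G/[G,G]$, so $\bar x$ escapes this product. Symmetrically, for $y\in\fY$ the remaining images span $G/[G,G]\times\langle\,\bar z\mid z\in\fY\=\{y\}\,\rangle$, and conormality of $\fY$ in $H$ yields $\bar y\notin\langle\,\bar z\mid z\in\fY\=\{y\}\,\rangle$. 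By the criterion of the first step, $\fX\cup\fY$ is conormal in $W$. The one genuinely informative step is the first: once conormality is seen to depend only on the abelianization—because $L/N_g$ is forced to be cyclic—the rest reduces to the transparent splitting $W/[W,W]\cong G/[G,G]\times H/[H,H]$, and no real obstacle remains.
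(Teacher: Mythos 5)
Your argument is correct, and it takes a genuinely different route from the paper's proof. Your key step --- since $L/N_g$ is generated by the image of the single generator $g$, it is cyclic, hence $[L,L]\subseteq N_g$, so that conormality of a generating set is detected entirely in the abelianization --- is sound, and together with the splitting $W/[W,W]\cong G/[G,G]\times H/[H,H]$ it disposes of both families of generators at once; note that quoting the presentation \eqref{e1} here is legitimate, since Corollary~\ref{c1} is proved without any conormality hypothesis, so there is no circularity. The paper instead constructs explicit separating epimorphisms: for $x\in\fX$ it uses the quotient $(H\wr G)/\hat H\simeq G$ and conormality of $\fX$ in $G$ directly (no abelianization needed), while for $y\in\fY$ it maps $H\wr G$ onto $C\wr G$ with $C=H/N_y$ cyclic, then applies the product-over-coordinates map $(f,g)\mapsto\prod_{a\in G}f(a)$, which is well defined precisely because $C$ is commutative. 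Your approach is in essence an abstraction of this second map: the product over coordinates is exactly the collapse of $H\wr G$ onto a quotient of its abelianization, and your first-step criterion isolates the reason this suffices. What you gain is a reusable general principle --- conormality depends only on $L/[L,L]$, which incidentally also explains the paper's opening remark that minimal generating sets of finite $p$-groups are conormal via $G/G^p[G,G]$ --- together with a uniform treatment of $\fX$ and $\fY$; what the paper's version gains is self-containedness (its proof of this lemma does not rely on \eqref{e1}) and fully explicit homomorphisms.
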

 \begin{proof}
 Since $G\simeq(H\wr G)/\hat H$, where $\hat H$ is the normal subgroup generated by all $y\in\fY$,
 it is clear that neither $x\in\fX$ belongs to the normal subgroup generated by $(\fX\=\{x\})\cup\fY$.
 On the other hand, there is an epimorphism $H\wr G\to C\wr G$, where $C=H/N_y$ for some $y\in\fY$;
 in particular, $C\ne\{1\}$ and is generated by the image $\bar y$ of $y$. Since $C$ is commutative, the map
 $C\wr G\to C,\ (f(x),g)\mapsto\prod_{x\in G}f(x)$ is also an epimorphism mapping $\bar y$ to itself.
 The resulting homomorphism $H\wr G\to C$ maps all $x\in\fX$ as well as all $z\in\fY\=\{y\}$ to $1$
 and $y$ to $\bar y\ne1$, which accomplishes the proof.
 \end{proof}
 
 \begin{exam} 
\begin{enumerate}
	\item The wreath product $C_n\wr C_m$, where $C_n$ denotes the cyclic group of order $n$, has a minimal presentation
	\begin{multline*}
	 C_n\wr C_m:= \gnrsuch{x,y}{x^m=1,\,y^n=1,\,[y,x^{-k}yx^k]=1\\ \text{ for } 1\le k\le m/2}.
	\end{multline*}
	(Possibly, $m=\8$ or $n=\8$, then the relation $x^m=1$ or, respectively, $y^n=1$ should be omitted.)
	\item The iterated wreath product $W_{\lst ns}=\bwr_{k=1}^sC_{n_i}$ has a minimal presentation
		\begin{multline*}
	 W_{\lst ns} := \gnrsuch{\lst xs}{x_i^{n_i}=1,\,[x_i,x_l^{-k}x_jx_l^k]=1\\
	  \text{ for } i<l,\,j<l,\ 1\le k\le n_l/2}.
 		\end{multline*}
\end{enumerate}
 \end{exam}
 
 Especially, since, by the Kaloujnine theorem \cite{ka} (see also \cite[Theorem~7.27]{ro}, though the bibliographic
 refernce is not correct there), the Sylow subgroup of the symmetric group $\bS_{p^n}$ is isomorphic to the wreath
 product $W_{p,p,\dots, p}$ ($n$ times), we get the following corollary.
 
 \begin{corol}
 The Sylow subgroup of $\bS_{p^n}$ has a minimal presentation
 \begin{multline*}
 \gnrsuch{\lst xn}{x_i^p=1\ (1\le i\le n),\ [x_i,x_l^{-k}x_jx_l]=1 \\
 					\text{ \emph{for} } i<l,\,j<l,\ 1\le k\le p/2}.
 \end{multline*}
 \emph{(Certainly, if $p>2$, $k\le p/2$ actually means $k\le(p-1)/2$.)}
 \end{corol}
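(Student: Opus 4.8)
The plan is to recognize the asserted presentation as the special case $n_1=n_2=\dots=n_n=p$ of the iterated wreath product treated in the Example, and hence as a direct application of Theorem~\ref{mult}. First I would invoke the Kaloujnine theorem \cite{ka} to identify the Sylow $p$-subgroup of $\mathbf{S}_{p^n}$ with the multiple wreath product $\bwr_{k=1}^n C_p$; once this identification is in place, everything that remains is a purely formal matter of presentations.

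Next I would verify the hypotheses of Theorem~\ref{mult} with $m=n$ and every factor $G_i=C_p=\gnrsuch{x_i}{x_i^p=1}$. This presentation is plainly minimal, and the singleton generating set $\fX_i=\{x_i\}$ is conormal for a trivial reason: the normal subgroup generated by $\fX_i\=\{x_i\}=\0$ is trivial, while $x_i\ne1$ in $C_p$. Thus every minimality and conormality requirement of Theorem~\ref{mult} is satisfied automatically, so its conclusion will at once produce the presentation \emph{and} guarantee its minimality, leaving no separate minimality argument to carry out.

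It then remains to transcribe the generic relations of \eqref{e2} in this concrete setting. For odd $p$ I would choose $T_l=\setsuch{x_l^k}{1\le k\le(p-1)/2}\subseteq C_p$; since $x_l^{-k}=x_l^{\,p-k}$, one checks immediately that $T_l\cap T_l^{-1}=\0$ and $T_l\cup T_l^{-1}=C_p\=\{1\}$, as demanded. The relations $[x,t^{-1}yt]=1$ for $x,y\in\bigcup_{i<l}\fX_i$ and $t\in T_l$ then specialize to exactly $[x_i,x_l^{-k}x_jx_l^k]=1$ for $i<l$, $j<l$, $1\le k\le(p-1)/2$, which is the asserted relation set.

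The one point that demands extra care — and the only genuine obstacle — is the prime $p=2$, where $C_2$ contains the involution $x_l=x_l^{-1}$, so that no set can satisfy both $T_l\cap T_l^{-1}=\0$ and $T_l\cup T_l^{-1}=C_2\=\{1\}$. Here I would simply take $T_l=\{x_l\}$ and note that the disjointness condition is used in the proof of Corollary~\ref{c1} only to avoid listing a relation together with its equivalent $t\mapsto t^{-1}$ partner; for the unique involution these partners coincide, so using the single element $t=x_l$ introduces no redundancy, and the non-omissibility argument of Corollaries~\ref{c2}--\ref{c3} (which fixes an arbitrary $c\ne1$, involution or not) applies verbatim. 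This is precisely the borderline case $k\le p/2$ with $k=1$ flagged in the statement, so minimality survives for $p=2$ as well.
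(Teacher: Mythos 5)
Your overall route is exactly the paper's: Kaloujnine's theorem identifies the Sylow $p$-subgroup of $\bS_{p^n}$ with $\bwr_{k=1}^n C_p$, and the presentation is then the case $n_1=\dots=n_n=p$ of item (2) of the Example, i.e.\ a direct application of Theorem~\ref{mult}. For odd $p$ your verifications --- minimality of $\gnrsuch{x_i}{x_i^p=1}$, conormality of the singleton $\fX_i$, and the choice $T_l=\setsuch{x_l^k}{1\le k\le(p-1)/2}$ --- are correct and complete, and agree with what the paper leaves implicit.

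The genuine gap is at $p=2$, precisely the point you flag and then dismiss too quickly. Your patch $T_l=\{x_l\}$ does salvage the \emph{presentation} claim (the condition $T\cap T^{-1}=\0$ is used in Corollary~\ref{c1} only to prune duplicate relations, not for correctness), but your assertion that the non-omissibility argument of Corollaries~\ref{c2}--\ref{c3} ``applies verbatim'' to an involution $c$ fails as soon as the base group has two distinct conormal generators $u\ne v$, i.e.\ for $n\ge3$. Indeed, with $c^2=1$ the group $K_{c,u,v}$ retains the relation indexed by $(t,y,z)=(c,v,u)$, which at $a=c$ reads $[v(c),u(c^2)]=[v(c),u(1)]=1$ and already forces the ``omitted'' relation $[u(1),v(c)]=1$; equivalently, the homomorphism $\psi$ in the proof of Corollary~\ref{c2} is no longer well defined, since it sends this retained relator to $[v',u]\ne1$ in $H_{u,v}$. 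Concretely, modulo $x_l^2=1$ conjugation by $x_l$ transforms $[x_i,x_lx_jx_l]=1$ into $[x_j,x_lx_ix_l]=1$, so for $i\ne j$ these relations come in equivalent pairs and half of them can be dropped: for $p=2$ and $n\ge3$ the displayed presentation is in fact \emph{not} minimal (only the relations with, say, $i\le j<l$ are needed, after which a modified non-omissibility argument over unordered pairs is required). To be fair, this hole is inherited from the paper itself: no $T$ with $T\cap T^{-1}=\0$ and $T\cup T^{-1}=G\=\{1\}$ exists in a group containing an involution, so Theorem~\ref{main} never literally applies to $G=C_2$, and the paper passes from the Theorem to this Corollary silently; you at least noticed the problem, but your fix establishes the presentation, not its minimality, when $p=2$ and $n\ge3$. (A side remark: like the paper's statement, your transcription should read $x_l^{-k}x_jx_l^k$ where the Corollary misprints $x_l^{-k}x_jx_l$.)
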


 \end{document}